\title{Smallest non-cyclic quotients of the automorphism group of free groups}
\author{Sudipta Kolay}
\address{ICERM, 121 South Main Street, 11th Floor\\Providence, RI 02903, USA.}
\email{sudipta\_kolay@brown.edu}
\date{}
\theoremstyle{plain}
\newtheorem{thm}{Theorem}
\newtheorem{prop}[thm]{Proposition}
\newtheorem{lem}[thm]{Lemma}
\theoremstyle{definition}
\theoremstyle{remark}
\newtheorem{example}[thm]{Example}
\begin{document}

\begin{abstract}
    We give an new, elementary proof of the result that the smallest non-cyclic quotients of automorphism group of free group is the linear group over the field of two elements, and moreover all minimal quotients are obtained by the standard projection composed with an automorphism of the image. This result, originally due to  Baumeister--Kielak--Pierro, proves a conjecture of Mecchia--Zimmermann.
\end{abstract}

\maketitle

\section{Introduction}
In this paper we explore the smallest non-cyclic quotients of $Aut(F)$, the automorphism group of a finite rank free group $F$. This is a particular instance of the following broad problem stated by Roger Lyndon~\cite{Lyndon1987PROBLEMSIC} in 1977:
\begin{center}
    \say{Determine the structure of Aut($F$), of its subgroups, especially its finite subgroups, and its quotient groups, as well as
the structure of individual automorphisms.}
\end{center}

Mecchia--Zimmermann \cite{Mecchia2009OnMF} proved that for $n\in\{3,4\}$, the the  smallest  non-trivial (respectively non-abelian) quotient  of SOut($F_n$) (respectively Out($F_n$)) is $SL(n,\mathbb{Z}_2)$,  and conjectured the same statement holds for arbitrary $n\geq 3$. This conjecture was proved by Baumeister--Kielak--Pierro \cite{Baumeister2019OnTS}, 
using the classification of finite simple groups and representation theory of SAut($F_n$). In fact they proved a stronger result for SAut($F_n$) and Aut($F_n$). Our goal in this paper is to give an alternate, elementary proof of this result, using the inductive orbit stabilizer method \cite[Section 3]{Kolay2021BMCG}.

\begin{thm}\label{A} Suppose $n\geq 3$, and $G$ be any of the groups $\mathrm{Aut}(F_n)$, $\mathrm{SAut}(F_n)$, $\mathrm{Out}(F_n)$, $\mathrm{SOut}(F_n)$, $GL(n,\mathbb{Z})$, $SL(n,\mathbb{Z})$.
If $H$ is a non-cyclic quotient of $G$, then either $|H|> |SL(n,\mathbb{Z}_2)|$, or $H$ is isomorphic to $SL(n,\mathbb{Z}_2)$. Moreover, in the latter case the quotient map $G\rightarrow H$ is obtained by postcomposing the natural map $\pi$ with an automorphism of $SL(n,\mathbb{Z}_2)$. 
\end{thm}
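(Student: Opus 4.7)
The plan is to proceed by induction on $n\ge 3$, using the inductive orbit--stabilizer technique of \cite[Section 3]{Kolay2021BMCG}. I would first reduce the six groups in the statement to the two cases $G=\mathrm{Aut}(F_n)$ and $G=\mathrm{SAut}(F_n)$: indeed, $\mathrm{Out}(F_n)$ and $GL(n,\mathbb{Z})$ are quotients of $\mathrm{Aut}(F_n)$ by normal subgroups contained in $K:=\ker\pi$, and similarly $\mathrm{SOut}(F_n)$ and $SL(n,\mathbb{Z})$ are quotients of $\mathrm{SAut}(F_n)$. Hence any non-cyclic quotient of any of them lifts to a non-cyclic quotient of the associated ``big'' group, and the ``standard projection composed with an automorphism of $SL(n,\mathbb{Z}_2)$'' conclusion transports cleanly back through these identifications.

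The structural step exploits the simplicity of $SL(n,\mathbb{Z}_2)$ for $n\ge 3$. Given a non-cyclic quotient $\phi:G\twoheadrightarrow H$, I form the normal subgroup $L:=K\cdot\ker\phi$. Then $G/L$ is a common quotient of $SL(n,\mathbb{Z}_2)$ and of $H$, so simplicity forces $G/L\in\{1,SL(n,\mathbb{Z}_2)\}$. If $G/L=SL(n,\mathbb{Z}_2)$, then $H$ surjects onto $SL(n,\mathbb{Z}_2)$; either the surjection is an isomorphism (in which case $\ker\phi=K$, so $\phi=\alpha\circ\pi$ for some $\alpha\in\mathrm{Aut}(SL(n,\mathbb{Z}_2))$) or it has nontrivial kernel and $|H|>|SL(n,\mathbb{Z}_2)|$. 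If instead $L=G$, then $\phi|_K$ is already surjective, so $H$ is a non-cyclic quotient of $K$, and the task reduces to ruling out such quotients of order $\le|SL(n,\mathbb{Z}_2)|$.

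For this remaining case, I would apply orbit--stabilizer to the image in $H$ of a carefully chosen $k\in K$, using conjugation by $G$. Natural candidates are double sign-inversions $\varepsilon_i\varepsilon_j\in K\cap\mathrm{SAut}(F_n)$ (or $\varepsilon_i$ itself when $G=\mathrm{Aut}(F_n)$), or a commutator of Nielsen generators lying in $K$. The centralizer $C_G(k)$ contains a visible subgroup isomorphic to (a finite-index extension of) $\mathrm{SAut}(F_{n-1})$ acting on the remaining basis elements, so $\phi(C_G(k))$ is a quotient of this smaller group; if it is non-cyclic, the inductive hypothesis supplies $|\phi(C_G(k))|\ge|SL(n-1,\mathbb{Z}_2)|$. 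Combined with a combinatorial lower bound on the $H$-orbit of $\phi(k)$ (coming from the $G$-conjugacy class of $k$ modulo $\ker\phi$), the product exceeds $|SL(n,\mathbb{Z}_2)|$, completing the induction.

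The main obstacle I foresee is the selection of $k$: it must simultaneously have a provably large $H$-orbit and a centralizer whose image in $H$ is provably non-cyclic, so that the inductive hypothesis can be invoked rather than being vacuously satisfied. A secondary technical point is the base case $n=3$, which must be handled directly and elementarily -- without the classification-of-finite-simple-groups or representation-theoretic input of Baumeister--Kielak--Pierro -- by an explicit analysis of the standard Nielsen presentation. A third, bookkeeping, subtlety is verifying that the ``automorphism of $SL(n,\mathbb{Z}_2)$'' freedom survives intact through the reductions among the six groups.
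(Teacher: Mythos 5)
Your structural reduction via $L := K\cdot\ker\phi$ and the simplicity of $SL(n,\mathbb{Z}_2)$ is clean and correct: it cleanly disposes of the case $\ker\phi\subseteq K$, where the ``moreover'' clause drops out by comparing kernels. But the remaining case, $K\cdot\ker\phi = G$, is where essentially all the work of the theorem lives, and your proposal does not close it. In that case $H=\phi(K)$ is a non-cyclic finite quotient of the (infinite, non-finitely-presented for large $n$) Torelli subgroup $K$, together with the extra data of the extension to $G$; the induction hypothesis as stated applies to $\mathrm{SAut}(F_{n-1})$, not to $K$, and there is no \emph{a priori} surjection from $K$ to anything the induction controls. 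Your plan to apply orbit--stabilizer to $\phi(k)$ for a single element $k\in K$ is precisely the ``most natural analogue'' that the paper's introduction says the author \emph{could not} make work directly; two concrete obstructions are visible in your own sketch: (i) you do not produce a lower bound on the orbit size of $\phi(k)$ — knowing the $G$-conjugacy class of $k$ is large in $G$ says nothing about how many distinct images it has modulo $\ker\phi$ — and (ii) you explicitly allow that $\phi(C_G(k))$ may be cyclic, in which case the inductive hypothesis gives nothing and the product bound collapses.

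The paper's route, which you would need to adopt or replace, is to run orbit--stabilizer not on a single element but on a \emph{subgroup}: the image $C$ of $\mathcal{C}=\mathcal{A}\mathcal{B}$, where $\mathcal{A}\cong\mathrm{SAut}(F_{n})$ stabilizes a basis element and $\mathcal{B}\cong F_n\times F_n$ twists it, so that $\mathcal{C}$ corresponds to the stabilizer of a hyperplane mod $2$. The orbit lower bound $2^{n+1}-1$ then comes from showing (Lemma~\ref{lemC}) that the $C_I$ attached to the $2^{n+1}-1$ distinct mod-$2$ hyperplanes remain distinct in $H$, using the change-of-basis Lemma~\ref{lemA} plus the Gersten relations to show that a coincidence would force $C_I=H$, contradicting properness of $C$ (Subsection~\ref{orb}). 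The stabilizer lower bound $2^n\cdot|SL(n,\mathbb{Z}_2)|$ comes from bounding $|A/(A\cap B_i)|$ by induction and bounding $|B|$ directly (Lemma~\ref{lemB}) using free transvections $E_{x_{n+1},x_j}$ and Gersten relations. None of this is an instance of orbit--stabilizer on a point, and the passage from elements to subgroups is the paper's main idea. Your proposal therefore has a genuine gap exactly at its core: the promised ``combinatorial lower bound on the $H$-orbit of $\phi(k)$'' and the non-vacuity of the inductive hypothesis are not supplied, and the author of the paper reports that this direct version cannot be made to work.

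One smaller point worth flagging: in your first paragraph the reduction is phrased as lifting quotients of the smaller groups to $\mathrm{Aut}(F_n)$ or $\mathrm{SAut}(F_n)$; the paper reduces everything to $\mathrm{SAut}(F_n)$ alone, using that $\mathrm{Aut}(F_n)/\mathrm{SAut}(F_n)\cong\mathbb{Z}_2$, which simplifies the bookkeeping about the ``automorphism of $SL(n,\mathbb{Z}_2)$'' clause that you mention as a worry.
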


 We state our result for non-cyclic quotients, instead of non-abelian or non-trivial quotients, but they are equivalent for $n\geq 3$. The groups Aut$(F_n)$, Out$(F_n)$, and $GL(n,\mathbb{Z})$ all have abelianizations $\mathbb{Z}_2$, and so their smallest non-abelian quotient is same as smallest non-cyclic quotient. Similarly, the groups SAut$(F_n)$, SOut$(F_n)$, and $SL(n,\mathbb{Z})$ are perfect (i.e.  abelianization is trivial), and so their smallest non-trivial quotient is same as smallest non-cyclic quotient.
 
 Theorem~\ref{A} does not hold in case $n=2$ for the groups $\mathrm{Aut}(F_2)$, $\mathrm{Out}(F_2)$, $GL(2,\mathbb{Z})$, where the smallest non-cyclic quotient is the Klein's four group $\mathbb{Z}_2\oplus \mathbb{Z}_2$. However, as we shall see, Theorem~\ref{A} is also true for the groups $\mathrm{SAut}(F_2)$, $\mathrm{SOut}(F_2)$, $SL(2,\mathbb{Z})$.

Let us note that it suffices to prove the above theorem for the family SAut($F_n$), because:
 \begin{enumerate}
     \item Any quotient of Aut($F_n$) can be restricted to obtain a quotient of SAut($F_n$), and if this quotient is trivial, then the original quotient must be cyclic since Aut($F_n)/$SAut$(F_n)\cong \mathbb{Z}_2$.
     \item If $A\rightarrow B$ and $B\rightarrow C$ are group epimorphisms, then if $C$ is the smallest quotient of $A$, then it is also the smallest quotient of $B$.
 \end{enumerate}

 The situation closely parallels that of braid groups $B_n$ and mapping class groups $\mathrm{Mod}(\Sigma_g)$. The quotients of smallest orders in these cases are symmetric groups $S_n$ and symplectic groups $\mathrm{Sp}(2g,\mathbb{Z}_2)$, respectively, and we have similar results for the quotient maps \cite{Kolay2021BMCG}.  Zimmermann\cite{Zimmermann2008ANO} proved the result for mapping class groups in the special cases $g\in\{3,4\}$, and conjectured the same result for all higher $g$. This conjecture was proved by Kielak--Pierro \cite{Kielak2017OnTS}, using very similar techniques as that of Baumeister--Kielak--Pierro \cite{Baumeister2019OnTS} result mentioned earlier. The author gave an elementary proof of these results for both braid and mapping class groups, using the inductive orbit-stabilizer method; and this paper is an analogue for the setting of automorphism group of free groups. Although the most natural analogue would have been to prove an optimal lower bound on the orbit of free transvections under any non-cyclic quotient, we were unable to prove this directly. However, we can make a similar approach work by looking at orbit of subgroups fixing a hyperplane setwise; illustrating the robustness of the inductive orbit-stabilizer method.\\

\noindent\emph{Acknowledgments}: The author is grateful to Dan Margalit and John Etnyre for helpful discussions. The author thanks Nancy Scherich for comments on an earlier draft of this paper. This work is supported by NSF grants DMS-1439786 and DMS-1906414 while the author is/was located at ICERM and Georgia Tech respectively.\\

\section{Setup}
The goal of this section is to set up appropriate notation and prove some preliminary results.

 \subsection{Background.}
We will use the notation Aut$(F_n)$ (respectively SAut$(F_n)$) to denote the group of all (respectively special\footnote{by special we mean orientation preserving, i.e. if we compose with the natural projection to $GL(n,\mathbb{Z})$, we should only get special (determinant 1) linear maps.}) automorphisms of the free group $F_n$.
Since the abelianization of $F_n$ is $\mathbb{Z}^n$, every automorphism of $F_n$ induces an automorphism of $\mathbb{Z}^n$. By further composing with mod $m$ reduction (for any natural number $m$), and subsequently modding out by the center, we obtain group homomorphisms\footnote{however, these will not be surjective in general, as any matrix in $GL(n,\mathbb{Z})$ has determinant $\pm 1$, but there can be more units in $\mathbb{Z}_m$ (this issue does not arise for the $SL(n,\mathbb{Z})$  setting).}:
$$\mathrm{Aut}(F_n)\rightarrow  GL(n,\mathbb{Z})\rightarrow  GL(n,\mathbb{Z}_m)\rightarrow PGL(n,\mathbb{Z}_m).$$
Analogously, we obtain the following for special automorphisms:
$$\mathrm{SAut}(F_n)\rightarrow  SL(n,\mathbb{Z})\rightarrow  SL(n,\mathbb{Z}_m)\rightarrow PSL(n,\mathbb{Z}_m).$$
These gives various non-cyclic quotients of (special) automorphism of free groups, and our main theorem states that the minimal quotient is obtained with $m=2$ above. Let us note that for $m=2$, we have the isomorphisms $GL(n,\mathbb{Z}_2)=PGL(n,\mathbb{Z}_2)=SL(n,\mathbb{Z}_2)=PSL(n,\mathbb{Z}_2)$.

 \subsection{Hyperplanes.}\label{hyp}
 Let us fix a basis $x_1,...,x_{n}$ of $F_{n}$ (and we will use this notation implicitly throughout this paper), and by applying the abelianization  map we get a basis $e_1,...,e_{n}$ of $\mathbb{Z}^{n}$. Reducing modulo 2, we obtain a basis $\epsilon_1,...,\epsilon_{n}$ of $\mathbb{Z}_2^{n}$.\\
 \noindent\textit{Hyperplanes in $\mathbb{Z}_2^{n}$}:
 We note that any hyperplane $P$ of $\mathbb{Z}_2^{n}$ can be characterized by an indicator vector $(i_1,...,i_n)$ where $i_j$ equals 1 if $\epsilon_j\in P$, and 0 otherwise. Conversely, given any binary vector, with the exception of the all ones vector\footnote{which corresponds to all of $\mathbb{Z}_2^{n}$.}, there is a unique hyperplane which has that as the indicator vector, as explained below. Firstly, let us note that if $\epsilon_j$ and $\epsilon_k$ (with $j\neq k$) are both not in a hyperplane $P$ of $\mathbb{Z}_2^{n}$, then $P$ must contain $\epsilon_j+\epsilon_k$ as any hyperplane has exactly two cosets.  Thus, given a binary vector $I=(i_1,...,i_n)$ different from the all ones vector,  we get the corresponding hyperplane $P_I$ determined by the equation $ \sum_{j=1}^n (1-i_j)\epsilon_j=0$. We can also describe $P_I$ in terms of a basis, as follows. If the indices (say, written in increasing order) $j$ so that $i_j=0$ are $z_1,\cdots,z_l$ and the remaining indices are $o_1,\cdots,o_{n-l}$, a basis of $P_I$ is given by $$\epsilon_{z_1}+\epsilon_{z_2},\cdots, \epsilon_{z_{l-1}}+\epsilon_{z_{l}}, \epsilon_{o_1},\cdots, \epsilon_{o_{n-l}}$$
 
 \noindent\textit{Hyperplanes in $F_{n}$}:
With the same notation as above, we can define the subgroup $S_I$ of $F_n$ with free basis  $$x_{z_1}x^{-1}_{z_2},\cdots, x_{z_{l-1}}x^{-1}_{z_{l}}, x_{o_1},\cdots, x_{o_{n-l}},$$
and we note that $S_I$ projects precisely to $P_I$. We note that we can extend it to a basis of all $F_n$ by adding any of the $x_j$ which is not in $S_I$ (i.e., add any $x_j$ where $i_j=0$).\\
The following lemma will be useful for us later.
\begin{lem}\label{lemA}
If $I$ and $I'$ are two distinct binary indicator vectors (neither equaling the all ones vector), we can choose a new basis of $F_n$, so that in using the corresponding basis of $\mathbb{Z}_2^{n}$,
the indicator vectors of the hyperplanes $S_I$ and $S_{I'}$ are $(1,0,1,\cdots, 1)$ and $(0,1,1,\cdots, 1)$ respectively.
\end{lem}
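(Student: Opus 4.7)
The plan is to reduce the statement to a short linear algebra calculation over $\mathbb{F}_2$ and then lift the resulting change of basis back up to $F_n$. The target indicator vectors $(1,0,1,\ldots,1)$ and $(0,1,1,\ldots,1)$ describe, in the new basis, the hyperplanes $\{\epsilon'_2=0\}$ and $\{\epsilon'_1=0\}$ respectively. So it suffices to exhibit a basis $\epsilon'_1,\ldots,\epsilon'_n$ of $\mathbb{Z}_2^n$ with $\epsilon'_1\in P_I\setminus P_{I'}$, $\epsilon'_2\in P_{I'}\setminus P_I$, and $\epsilon'_3,\ldots,\epsilon'_n$ forming a basis of $P_I\cap P_{I'}$.

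First I would carry out this linear algebra step. Since $P_I\neq P_{I'}$ are distinct hyperplanes of $\mathbb{Z}_2^n$, their intersection $P_I\cap P_{I'}$ has dimension $n-2$, so I can pick a basis $u_3,\ldots,u_n$ of it. Moreover $P_I\setminus P_{I'}$ and $P_{I'}\setminus P_I$ are nonempty, so I can pick $v\in P_I\setminus P_{I'}$ and $w\in P_{I'}\setminus P_I$. A short check confirms that $v,w,u_3,\ldots,u_n$ is a basis of $\mathbb{Z}_2^n$: reducing a hypothetical dependence $\alpha v+\beta w+\sum\gamma_j u_j=0$ modulo $P_I$ kills everything except $\beta w$, forcing $\beta=0$; reducing modulo $P_{I'}$ then forces $\alpha=0$; and linear independence of the $u_j$ gives $\gamma_j=0$. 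Setting $\epsilon'_1:=v$, $\epsilon'_2:=w$, and $\epsilon'_j:=u_j$ for $j\geq 3$ yields the required indicator vectors.

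Next I would lift this basis back to $F_n$. The change of basis is recorded by an element $M\in GL(n,\mathbb{Z}_2)$. The reduction map $GL(n,\mathbb{Z})\to GL(n,\mathbb{Z}_2)$ is surjective, so $M$ lifts to some $\widetilde{M}\in GL(n,\mathbb{Z})$; and by Nielsen's theorem that $\mathrm{Aut}(F_n)\to GL(n,\mathbb{Z})$ is surjective, $\widetilde{M}$ further lifts to an automorphism $\varphi$ of $F_n$. Then $\varphi(x_1),\ldots,\varphi(x_n)$ is the desired new basis of $F_n$, since its induced basis on $\mathbb{Z}_2^n$ is exactly $\epsilon'_1,\ldots,\epsilon'_n$ by construction.

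I do not anticipate any real obstacle. The only points that require attention are the one-line linear independence check above and the two standard surjectivity statements used in the lifting; everything else is routine. If one preferred a more hands-on approach, the lifts can even be written down explicitly in terms of products of Nielsen transformations on $x_1,\ldots,x_n$, though this is not necessary for the argument.
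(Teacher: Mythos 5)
Your proof is correct, and it takes a genuinely different route from the paper's. The paper establishes the lemma constructively: it first treats the special case where $I$ and $I'$ disagree in every coordinate (writing down an explicit new basis of $F_n$ in terms of $x_j$'s), then successively handles the effect of appending common $1$'s and common $0$'s, finally reducing the general case to these by reordering. Your approach instead observes that the stated conclusion is purely about the images in $\mathbb{Z}_2^n$: you produce the required basis of $\mathbb{Z}_2^n$ by a three-line linear-algebra argument (pick $v\in P_I\setminus P_{I'}$, $w\in P_{I'}\setminus P_I$, and a basis of the codimension-two intersection, then verify independence by reducing mod $P_I$ and mod $P_{I'}$), and then lift this change of basis through the standard surjections $\mathrm{Aut}(F_n)\twoheadrightarrow GL(n,\mathbb{Z})\twoheadrightarrow GL(n,\mathbb{Z}_2)$. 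Your version is shorter and more conceptual, and makes transparent that the lemma is really a fact about pairs of distinct hyperplanes in $\mathbb{F}_2^n$; the paper's version is fully explicit, producing the new basis as concrete words in the $x_j$'s without invoking the surjectivity of $\mathrm{Aut}(F_n)\to GL(n,\mathbb{Z}_2)$, which could be preferable if one wants to compute the resulting conjugating automorphism by hand.
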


Before we prove the lemma, let us illustrate it by an example below.

\begin{example} Suppose $I=(1,1,0,0,0)$ and $I'=(0,0,0,1,1)$. The desired result holds when we consider the new basis of $F_5$ to be $y_1=x_1$, $y_2=x_2x_3^{-1}, y_3=x_1x_2^{-1},y_4=x_1x_3^{-1}x_4$ and $y_5=x_4x_5^{-1}$.
\end{example}

\begin{proof}[Proof of Lemma~\ref{lemA}] Let us begin by considering the special case that $I$ and $I'$ disagree on all entries. Without loss of generality, by changing the order of the free basis, let us assume precisely the first $k$ entries of $I$ are 1, and the rest 0 (and thus the first $k$ entries of $I'$ are 0, and the rest 1). Then the desired result holds in the new basis:
$y_1=x_1, y_2=x_{k+1}, y_{2+j}=x_jx_ {j+1}^{-1} $ for $1\leq j \leq k-1$, and $y_{1+j}=x_jx_ {j+1}^{-1}$ for $k+1\leq j \leq n-1.$\\
If we now consider the case where we append $m$ 1's to both $I$ and $I'$ (in the special case of the previous paragraph) to get binary indicator vectors with $n+m$ entries, we see that the lemma continues to hold if we extend the above basis by setting $y_{n+l}=x_{n+l}$ with $1\leq l \leq m$.

Now let us consider the case where we append $z$ 0's to both $I$ and $I'$ in the previous paragraph. The result holds if we  now extend the basis by setting  $y_{n+m+l}=x_{n+m+l}x_{n+m+l+1}^{-1}$ with $1\leq l \leq z-1$ and $y_{n+m+z}=x_1 x_{n+m+1}^{-1}x_2$.

Lastly, in the general case, we can reorder the basis elements so that we are in the case of the previous paragraph, and so the lemma is proved.

\end{proof}

\subsection{Some facts about automorphism groups of free groups} 
  We mainly follow the conventions of Gersten \cite{Gersten1984APF}, that is, maps in $\mathrm{Aut}(F_n)$ are applied left to right (so $\phi\cdot \psi$ means first apply $\phi$ and then $\psi$). Let us define the free transvection $E_{x_i,x_j}$ by $x_i \mapsto x_ix_j$ and fixing all other $x_k$'s, and $E_{x_i^{-1},x_j}$ by $x_i \mapsto x_j^{-1}x_i$ and fixing all other $x_k$'s.  It turns out the free transvections $E_{x_i^{\pm{1}},x_j}$ generate $\mathrm{SAut}(F_n)$, and they satisfy the relations $[E_{x_i,x_j},E_{x_k,x_l}]=1$ for distinct $i,j,k,l$; and $[E_{x_i,x_j},E_{x_j,x_k}]=E_{x_i,x_k}$.  We refer the reader to \cite{Gersten1984APF} for more details and a presentation of SAut$(F_{n})$.\\
 For future use, let us also record here the fact, about normal generators of the Torelli  group, i.e. the kernel of the homomorphism $\mathrm{Aut}(F_n)\rightarrow  GL(n,\mathbb{Z})$.
 \begin{prop}\label{toreli}\cite[Magnus]{Magnus1935berndimensionaleG} For $n\geq 2$, the Torelli  group is normally generated by $E_{x_1,x_2}E_{x_1^{-1},x_2}$.

 \end{prop}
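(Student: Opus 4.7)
The plan is to combine a classical generating set for the Torelli subgroup $\mathrm{IA}_n := \ker(\mathrm{Aut}(F_n)\to GL(n,\mathbb{Z}))$ with a short conjugation argument producing each generator from $\tau := E_{x_1,x_2}E_{x_1^{-1},x_2}$. As a first step, I would identify $\tau$ explicitly: reading left to right, $E_{x_1,x_2}$ sends $x_1\mapsto x_1x_2$ and then $E_{x_1^{-1},x_2}$ sends $x_1\mapsto x_2^{-1}x_1$, so $\tau$ acts by $x_1\mapsto x_2^{-1}x_1x_2$ and fixes every other basis element. Thus $\tau$ is the classical Magnus conjugation generator that conjugates $x_1$ by $x_2$; denote this automorphism by $K_{12}$.

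Next, I would invoke the theorem of Magnus that for $n\geq 3$, $\mathrm{IA}_n$ is generated by the conjugation automorphisms $K_{ij}$ (with $i\neq j$), given by $x_i\mapsto x_j^{-1}x_ix_j$ and fixing other basis elements, together with the commutator automorphisms $C_{i;j,k}$ (for $i,j,k$ distinct), given by $x_i\mapsto x_i[x_j,x_k]$ and fixing other basis elements. For $n=2$, a theorem of Nielsen identifies $\mathrm{IA}_2$ with the inner automorphism group, which is generated by conjugation by $x_1$ and $x_2$; in $F_2$, conjugation by $x_j$ coincides with $K_{ij}$ (for $\{i,j\}=\{1,2\}$) because $x_j$ commutes with itself. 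Any basis permutation of $F_n$ lies in $\mathrm{Aut}(F_n)$, and conjugating $K_{12}$ by a suitable permutation yields any $K_{ij}$. This already handles the $n=2$ case and disposes of the conjugation generators in general.

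For the commutator generators (needed only when $n\geq 3$), the core computation is to check directly that
$$E_{x_k,x_i}^{-1}\cdot K_{ij}\cdot E_{x_k,x_i}\cdot K_{ij}^{-1}$$
acts by $x_k\mapsto x_k[x_j,x_i]$ and fixes every other basis element, and so coincides with $C_{k;j,i}$. Since $K_{ij}$ already lies in $\langle\langle\tau\rangle\rangle$ by the previous paragraph and $E_{x_k,x_i}\in\mathrm{Aut}(F_n)$, the entire displayed product lies in $\langle\langle\tau\rangle\rangle$, and therefore so does $C_{k;j,i}$. Varying the indices (again via basis permutations) produces every $C_{i;j,k}$, completing the reduction to Magnus's generators.

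The step most likely to be delicate is the commutator calculation: correctly composing four Nielsen automorphisms under the Gersten left-to-right convention and then simplifying the resulting word in $F_n$ to the exact commutator $[x_j,x_i]$ is elementary but error-prone, especially with the various sign and order conventions. Apart from this, the argument is essentially bookkeeping on top of Magnus's generation theorem and the initial identification of $\tau$.
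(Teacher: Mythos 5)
The paper gives no proof of this proposition at all --- it is stated purely as a citation to Magnus's 1935 paper --- so there is no in-text argument to compare against. Your proposal is a correct derivation of the cited fact from Magnus's generation theorem. The identification of $\tau=E_{x_1,x_2}E_{x_1^{-1},x_2}$ with the conjugation automorphism $K_{12}\colon x_1\mapsto x_2^{-1}x_1x_2$ is right under Gersten's left-to-right convention, and the permutation argument correctly places every $K_{ij}$ in $\langle\langle\tau\rangle\rangle$ (for the paper's intended use one needs the normal closure inside $\mathrm{SAut}(F_n)$ rather than $\mathrm{Aut}(F_n)$, but since $\tau$ is centralized up to inversion by a single sign change $x_n\mapsto x_n^{-1}$, the two normal closures agree; this is worth a remark). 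I also checked the computation you flagged as error-prone: writing $E=E_{x_k,x_i}$ and $K=K_{ij}$, the left-to-right composite $E^{-1}\cdot K\cdot E\cdot K^{-1}$ fixes $x_i$, $x_j$ and every $x_l$ with $l\neq i,j,k$, and on $x_k$ gives
\[
x_k \;\longmapsto\; x_k x_i^{-1} \;\longmapsto\; x_k x_j^{-1}x_i^{-1}x_j \;\longmapsto\; x_k x_i x_j^{-1}x_i^{-1}x_j \;\longmapsto\; x_k\, x_j x_i x_j^{-1}x_i^{-1},
\]
so it is indeed $x_k\mapsto x_k[x_j,x_i]$ with $[a,b]=aba^{-1}b^{-1}$, which is the commutator convention consistent with the Gersten relation $[E_{x_i,x_j},E_{x_j,x_k}]=E_{x_i,x_k}$ stated in the paper. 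The $n=2$ reduction to Nielsen's theorem $\mathrm{IA}_2=\mathrm{Inn}(F_2)$ is also correct. So the argument is complete and fills in the derivation that the paper leaves implicit by citing Magnus.
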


\subsection{Some subgroups of the group of special automorphisms}

Let us define the subgroup $\mathcal{A}$ of SAut$(F_{n+1})$ to consist of all special automorphisms which fix $x_{n+1}$, and induces an automorphism on the free subgroup $F_n$ generated by $x_1,...,x_n$. Thus $\mathcal{A}$ is isomorphic to SAut($F_n$).
Similarly, we define the subgroup $\mathcal{B}$ of SAut$(F_{n+1})$ to be the embedded copy $F_n\times F_n$ defined by $$(v,w)\mapsto \phi_{v,w},\text{ where $\phi_{v,w}$ is defined by } [x_1,....,x_n,x_{n+1}]\mapsto [x_1,....,x_n,vx_{n+1}w^{-1}]. $$

Similarly, we can define subgroups  $\mathcal{B}_1$ and $\mathcal{B}_2$ to be the embedded copies of $F_n$ defined by the formulas  $v\mapsto \phi_{v,1}$, and  $w\mapsto \phi_{1,w}$ respectively (i.e. we are looking at the image of the factors $F_n\times \{1\}$ and $\{1\}\times F_n$ in $F_n\times F_n$).

Let us note that for any $\psi\in \mathcal{A}$, we have $\psi^{-1} \phi_{v,w}\psi= \phi_{\psi(v),\psi(w)}$, and thus $\psi^{-1}\mathcal{B}\psi= \mathcal{B}$. Consequently, we have $\mathcal{A}\mathcal{B}=\mathcal{B}\mathcal{A}$; and therefore $\mathcal{C}:=\mathcal{A}\mathcal{B}$ is a subgroup of SAut$(F_{n+1})$. Moreover we see that $\mathcal{B}$, $\mathcal{B}_1$  and $\mathcal{B}_2$ are all normal subgroups of $\mathcal{C}$.

\section{Proof of Theorem~\ref{A}}

We will prove Theorem~\ref{A} by induction, and we begin by stating the induction hypothesis.

\subsection{Induction hypothesis} For any natural number $n\geq 2$, let $P(n)$ be the statement:\\ If $H$ is a non-cyclic quotient of $\mathrm{SAut}(F_n)$, then either $|H|> |SL(n,\mathbb{Z}_2)|$, or $H$ is isomorphic to $SL(n,\mathbb{Z}_2)$. Moreover, in the latter case the quotient map $\mathrm{SAut}(F_n)\rightarrow H$ is obtained by postcomposing the natural map $\pi$ with an automorphism of $SL(n,\mathbb{Z}_2)$.\\

\noindent We will prove Theorem~\ref{A} by (checking the base cases and) carrying out the following steps of the inductive orbit stabilizer method.
\begin{enumerate}
    \item The conjugacy class of the image of $\mathcal{C}$ under any non-cyclic quotient of SAut$(F_{n+1})$ has at least $2^{n+1}-1$ elements.
    \item Inductively, the stabilizer of the image of $\mathcal{C}$ has cardinality at least $2^n\cdot|SL(n,\mathbb{Z}_2)| $, and the result follows by the orbit stabilizer theorem.
\end{enumerate}

\subsection{Base cases} In case $n=2$, the candidate quotient $SL(2,\mathbb{Z}_2)\cong S_3$, which is the smallest non-abelian group. Since SAut($F_2$) has abelianization\footnote{ This fact is a consequence of the presentation of $\mathrm{SAut}(F_2)$, due to Gersten~\cite[Theorem 1.4]{Gersten1984APF}, as explained below. We can check that four of the generators $E_{x_2,x_1}$, $E_{x^{-1}_1,x_2 }$,$E_{x_1,x^{-1}_2 }$ and $E_{x^{-1}_2,x^{-1}_1}$ are conjugate (and the other four generators are their inverses), and so must map to the same element in the abelianization. The result now follows from the relation $w_{ab}^4=1$. } $\mathbb{Z}_{12}$, it  cannot have any smaller non-cyclic quotient (i.e. we are ruling out the Klein's four group). The second statement about the maps also follows readily, since the images of $E_{x_1,x_2}$ and $E_{x_2,x^{-1}_1}$ must be conjugate an non-commuting (and the only pair of such elements in $S_3$ are transpositions).\\
We will hereafter assume $n+1\geq 3$, and later in the inductive step we will show $P(n+1)$ holds assuming $P(n)$ is true. But we will need some preliminary results before that.

\subsection{Lower bound on size of subgroup quotients} Suppose we have a non-cyclic quotient $H$ of SAut$(F_{n+1})$, and let us denote the images of $\mathcal{A}$ and $\mathcal{B}$ by $A$ and $B$ respectively.
As $H$ is non-cyclic, it follows from the Gersten presentation of SAut$(F_n)$ that $A$ must not be a non-cyclic (as the free transvections are all conjugate) quotient of SAut$(F_n)$, and thus we can inductively bound the size of $A$. So we will now focus on finding a lower bound on the size of $B$. Let us first note that our candidate smallest quotient of SAut$(F_{n+1})$ is SL$(n,\mathbb{Z}_2)$, and under this quotient map the image of $\mathcal{B}$ is $\mathbb{Z}_2^n$, which has size $2^n$. So the best possible lower bound for $|B|$ we can hope for is $2^n$, and in fact the following (slightly stronger) lemma shows that this is indeed the case.

\begin{lem}\label{lemB} Under any non-cyclic quotient of SAut$(F_{n+1})$, the image $B$ of $\mathcal{B}$ has size at least $2^n$ (and same statement holds for the images $B_1, B_2$ of $\mathcal{B}_1,\mathcal{B}_2$ respectively).

\end{lem}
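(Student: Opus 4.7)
The plan is to show $|B_1| \geq 2^n$, from which $|B| \geq |B_1| \geq 2^n$ is immediate, and the symmetric argument covers $B_2$. Let $K := \ker(q)$ for the quotient $q: \mathrm{SAut}(F_{n+1}) \to H$, and set $N := K \cap \mathcal{B}_1$, so $B_1 \cong \mathcal{B}_1/N$. Because $K$ is normal in $\mathrm{SAut}(F_{n+1})$ and $\mathcal{A}$ normalizes $\mathcal{B}_1$ via $\psi^{-1}\phi_{v,1}\psi = \phi_{\psi(v),1}$, the subgroup $N$ is $\mathcal{A}$-invariant. Under the identification $\mathcal{B}_1 \cong F_n$ sending $\phi_{v,1}$ to $v$, this action is precisely the natural $\mathrm{SAut}(F_n)$-action on $F_n$.

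The principal tool is the $\mathcal{A}$-equivariant surjection $\rho: \mathcal{B}_1 \cong F_n \twoheadrightarrow \mathbb{Z}_2^n$ obtained by abelianizing and reducing mod $2$, with $\mathcal{A}$ acting on $\mathbb{Z}_2^n$ via the standard $SL(n, \mathbb{Z}_2)$-representation. Since $SL(n, \mathbb{Z}_2)$ acts transitively on the nonzero vectors of $\mathbb{Z}_2^n$ when $n \geq 2$, the $\mathcal{A}$-invariant subspace $\rho(N)$ is either $0$ or all of $\mathbb{Z}_2^n$. In the former case, $N \subseteq \ker\rho$, so $B_1$ surjects onto $\mathbb{Z}_2^n$ and $|B_1| \geq 2^n$ as required.

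The main step is therefore to rule out $\rho(N) = \mathbb{Z}_2^n$. Suppose for contradiction it holds. Then some $\phi_{v,1} \in N \subseteq K$ has $\bar v \not\equiv 0 \pmod 2$, and after applying a suitable element of $\mathcal{A}$ we may arrange $\bar v$ to be a primitive vector in $\mathbb{Z}^n$. When $v$ itself is primitive in $F_n$, the transitivity of $\mathrm{SAut}(F_n)$ on primitives of $F_n$ puts $\phi_{x_1,1}$ into the $\mathcal{A}$-orbit of $\phi_{v,1}$, so $\phi_{x_1,1} \in N \subseteq K$. Since all free transvections lie in a single $\mathrm{SAut}(F_{n+1})$-conjugacy class and normally generate the group, $K$ would then equal $\mathrm{SAut}(F_{n+1})$, contradicting $H$ being non-cyclic.

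The technical heart of the argument is the case where $\bar v$ is primitive in $\mathbb{Z}^n$ but $v$ is not primitive in $F_n$; this can occur for $n \geq 3$ (for example $v = x_1 [x_2, x_3]$). The plan is to combine $\mathcal{A}$-invariance of $N$ with the product rule $\phi_{v,1}\phi_{w,1} = \phi_{vw,1}$ to manufacture auxiliary elements $\phi_{c,1} \in N$ with $c \in [F_n, F_n]$: for any $\psi \in \mathcal{A}$ fixing $\bar v \in \mathbb{Z}^n$, the element $\phi_{\psi(v) v^{-1},1}$ lies in $N$ and has commutator first coordinate. Appealing to Magnus' normal generation of the Torelli group (Proposition~\ref{toreli}) to see that such elements suffice to produce $\phi_{v^{-1} x_1,1} \in N$, one can then write $\phi_{x_1,1} = \phi_{v,1} \cdot \phi_{v^{-1}x_1,1}$ as a product of elements in $N$, yielding the desired contradiction. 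Making this last step fully rigorous is the principal obstacle I anticipate.
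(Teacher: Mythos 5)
Your approach is genuinely different from the paper's, and the structural idea at the start is a nice one, but as you anticipate there is a real gap that your sketch does not close, and there is also a subtle error earlier.

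Your setup is sound: $N := K\cap\mathcal{B}_1$ is a normal, $\mathcal{A}$-invariant subgroup of $\mathcal{B}_1\cong F_n$, and since $SL(n,\mathbb{Z}_2)$ is irreducible on $\mathbb{Z}_2^n$ the subspace $\rho(N)$ is $0$ or everything; if it is $0$ you are done. The trouble starts when you try to rule out $\rho(N)=\mathbb{Z}_2^n$. First, the claim that ``after applying a suitable element of $\mathcal{A}$ we may arrange $\bar v$ to be a primitive vector in $\mathbb{Z}^n$'' is false as stated: the gcd of the coordinates of $\bar v\in\mathbb{Z}^n$ is an $SL(n,\mathbb{Z})$-invariant, so if $\bar v = 3e_1$, say, no element of $\mathcal{A}$ will make $\bar v$ primitive. (That particular sub-case can be patched by a different route: if the image of $N$ in $\mathbb{Z}^n$ is $m\mathbb{Z}^n$ for odd $m\geq 3$, then $B_1$ surjects onto $\mathbb{Z}_m^n$ and you get $|B_1|\geq m^n>2^n$ directly; but you would need to say this.) Second, and more importantly, the case you yourself flag --- $\bar v$ primitive in $\mathbb{Z}^n$ but $v$ not primitive in $F_n$ --- is genuinely the crux, and the sketch you give (``appealing to Magnus' normal generation of the Torelli group to see that such elements suffice to produce $\phi_{v^{-1}x_1,1}\in N$'') is not an argument; there is no reason the commutator-type elements $\phi_{\psi(v)v^{-1},1}$ you manufacture will normally generate enough of the Torelli group sitting inside $\mathcal{B}_1$. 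As written, the proof is incomplete.

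By contrast, the paper's proof avoids all of this machinery. It exhibits $2^n$ explicit elements $\vec{R}^{\vec p} := R_1^{p_1}\cdots R_n^{p_n}$ (with $R_j = E_{x_{n+1},x_j}$ and $p_j\in\{0,1\}$) inside $\mathcal{B}_2$, notes that each nonzero one is itself a free transvection and hence has nontrivial image, and shows that if two of them had equal image then after a change of basis one may assume $R_1$ and $R_2$ agree in $H$; the Gersten relations $[R_1,E_{x_1,x_3}]=R_3$ and $[R_2,E_{x_1,x_3}]=1$ then force a free transvection into the kernel, collapsing the whole quotient. No abelianization of $N$, no primitivity of group elements, and no Torelli group are needed. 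If you want to rescue your structural approach, the missing ingredient is a classification (or at least enough control) of $\mathrm{SAut}(F_n)$-invariant normal subgroups of $F_n$ that surject onto $\mathbb{Z}_2^n$, which is substantially harder than the lemma itself.
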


\begin{proof} Let $R_j$ denote the free transvection $E_{x_{n+1},x_j}$.
We claim that for $\vec{p}\in\mathbb{Z}_2^n$, the $2^n$ elements $\vec{R}^{\vec{p}}:=R_{1}^{p_1}R_2^{p_2} \cdots R_{n}^{p_n} $ with $p_j\in \{0,1\}$ (with $1\leq j \leq n$) must have different quotient classes. Let us first note that for non-zero $\vec{p}$, the automorphism $\vec{R}^{\vec{p}}$ is a free transvection, and so they must map to non-trivial quotient classes. Now let us suppose that the quotient classes of  $\vec{R}^{\vec{p}}$ and $\vec{R}^{\vec{q}}$ coincide for two non-zero binary vectors $\vec{p}$ and $\vec{q}$. Then by a change of coordinates\footnote{ Let $\vec {X}^{\vec{p}}$ denote $x_n^{p_n}\cdots x_1^{p_1}$. We note that we can extend $\vec {X}^{\vec{p}}$ and $\vec {X}^{\vec{q}}$ to a free basis of $F_n$, by appending $x_i$'s sequentially and throwing out the ones that cause problems. This is analogous to extending $\vec {p}$ and $\vec{q}$ to a vector space basis of $\mathbb{Z}_2^n$ by systematically adding the standard basis vectors $e_i$ and and ignoring the problematic ones.}, we see that we may assume the quotient classes of $R_1$ and $R_2$ coincide. Since we assume $n+1\geq 3$, we see that this implies by the Gersten relations that the quotient classes of $R_3=[R_1,E_{x_1,x_3}]$ and $1=[R_2,E_{x_1,x_3}]$ must coincide. Thus all free transvections are in the kernel, so the quotient must be cyclic, a contradiction. The result follows for $B_2$ (and $B$), and by symmetry it also holds for $B_1$.\end{proof}

 \subsection{The orbit}\label{orb} Using the same notation as above, we will consider the conjugacy class of $AB$ in the non-cyclic quotient $H$. Let us first note that $AB$ must be a proper subgroup of $H$, because if $AB=H$, then we get an even smaller quotient of SAut$(F_{n+1})$ by looking at $H/B_i$ for $i=1,2$ (let us recall from the previous section that $\mathcal{B}_i$ is a normal subgroup of $\mathcal{AB}$, and the fact that images of normal subgroups are normal in the image). If either $H/B_1$ or $H/B_2$ is nontrivial we obtain a
 non-cyclic quotient of strictly smaller cardinality. Otherwise $B_1=H=B_2$, and since $B_1$ and $B_2$ commute, this implies $H$ must be abelian, a contradiction to the fact SAut$(F_{n+1})$ is perfect.
 
\subsection{Bounding the orbit size} We will now show that that there are at least $2^{n+1}-1$ conjugacy classes of $AB$ in $H$ (this corresponds to the subgroups of $SL(n,\mathbb{Z}_2)$ fixing a hyperplane setwise, and we recall $2^{n+1}-1$ hyperplanes in $\mathbb{Z}_2^{n}$).
For any binary indicator vector $I$ (we ignore the all ones vector as always), let us consider the basis of $F_{n+1}$ coming from extending the basis of $S_I$ as we saw in Subsection~\ref{hyp},  and let $\mathcal{A}_I,\mathcal{B}_I,\mathcal{C}_I$ denote the subgroups defined the same way as $\mathcal{A},\mathcal{B},\mathcal{C}$ using this new basis, instead of the original basis $x_1,\cdots,x_{n+1}$ 
(thus the latter three groups are obtained when $I$ equals the $(1,1,\cdots,1,0)$ vector). We note that this group is independent of which $x_j$ we appended to the free basis of $S_I$ to complete it to a basis of $F_{n+1}$. We will denote the image of $\mathcal{C}_I$ by $C_I$. It is clear that all the subgroups $C_I$'s are conjugate (by using a change of coordinates\footnote{and moreover even by a special automorphism by composing with an inversion of a single basis element if required.}) in $H$.

\begin{lem}\label{lemC} For $n\geq 1$, the $2^{n+1}-1$ subgroups $C_I$ are all distinct in any non-cyclic quotient $H$ of SAut$(F_{n+1})$.

\end{lem}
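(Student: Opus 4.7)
The plan is proof by contradiction, exploiting the 2-transitivity of the induced action on hyperplanes to propagate a single equality into a global collapse. Suppose toward contradiction that $C_I = C_{I'}$ for some pair of distinct indicator vectors. First, I would observe that the action of $\mathrm{SAut}(F_{n+1})$ on indicator vectors (via change of basis) factors through the surjection to $GL(n+1,\mathbb{Z}_2) = SL(n+1,\mathbb{Z}_2)$, and this latter group acts transitively on ordered pairs of distinct hyperplanes in $\mathbb{Z}_2^{n+1}$: any two distinct nonzero vectors over $\mathbb{Z}_2$ are automatically linearly independent, so any such ordered pair extends to a basis, and any two bases are related by an invertible linear map. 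Hence for any other distinct pair $J \neq J'$ there exists $\psi \in \mathrm{SAut}(F_{n+1})$ taking $(I, I')$ to $(J, J')$ under this action, and conjugation by $\pi(\psi)$ in $H$ takes $(C_I, C_{I'})$ to $(C_J, C_{J'})$. The assumption $C_I = C_{I'}$ then propagates to $C_J = C_{J'}$ for all distinct pairs, so the $2^{n+1}-1$ subgroups $C_J$ all coincide with a common subgroup $C$, which is consequently normal in $H$.

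Next, the plan is to show $C = H$. For any $i \neq j$ in $\{1,\ldots,n+1\}$, since $n+1 \geq 3$ I can pick $k \notin \{i,j\}$ and take $J$ to be the indicator vector with a single zero at position $k$. Then $S_J = \langle x_l : l \neq k\rangle$ contains $x_i$ and $x_j$, so the free transvections $E_{x_i, x_j}$ and $E_{x_i^{-1}, x_j}$ both lie in $\mathcal{A}_J \subset \mathcal{C}_J$, hence their images in $H$ lie in $C_J = C$. As free transvections generate $\mathrm{SAut}(F_{n+1})$, this forces $C = H$.

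The contradiction now comes from $C = C_{(1,\ldots,1,0)} = AB$, giving $AB = H$, which directly contradicts the analysis in Subsection~\ref{orb}. The main obstacle I anticipate is that the argument of Subsection~\ref{orb} most cleanly rules out $AB = H$ when $H$ is taken to be a minimal non-cyclic counterexample. For arbitrary non-cyclic $H$, I would first reduce to this case by passing to a minimal non-cyclic quotient: since all nontrivial quotients of the perfect group $H$ remain perfect and hence non-cyclic, iterating the quotient construction terminates at a minimal non-cyclic quotient, and the hypothesized equality $C_I = C_{I'}$ descends to this quotient, where the contradiction is direct.
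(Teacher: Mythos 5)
Your proof is correct and follows essentially the same strategy as the paper's: exploit the $2$-transitivity of the change-of-basis action on hyperplanes to reduce a single coincidence $C_I = C_{I'}$ to a global one, observe that the relevant transvections are thereby captured, conclude $C = H$, and contradict the proper-subgroup fact from Subsection~\ref{orb}. The paper's version is slightly leaner: it invokes Lemma~\ref{lemA} to move $(I,I')$ to the normalized pair $\bigl((1,0,1,\dots,1),(0,1,1,\dots,1)\bigr)$ and then checks directly that $\mathcal{C}_I \cup \mathcal{C}_{I'}$ already supplies all the missing transvections, whereas you first propagate the equality to \emph{every} pair $(J,J')$ and then cover the transvections one at a time by choosing, for each $E_{x_i^{\pm 1},x_j}$, a hyperplane avoiding $i$ and $j$. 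Both routes work; yours is perhaps a bit more transparent about why every generator is caught.

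Two small points worth flagging. First, the step ``conjugation by $\pi(\psi)$ takes $(C_I, C_{I'})$ to $(C_J, C_{J'})$'' requires more than $2$-transitivity of $GL(n+1,\mathbb{Z}_2)$ on pairs of hyperplanes: you need a free-group automorphism $\psi$ with $\psi(\mathcal{C}_I)\psi^{-1} = \mathcal{C}_J$ and $\psi(\mathcal{C}_{I'})\psi^{-1} = \mathcal{C}_{J'}$ simultaneously, i.e.\ a single basis of $F_{n+1}$ adapted to both $S_I$ and $S_{I'}$ in the standard form. That is exactly what Lemma~\ref{lemA} is designed to provide, so you should cite it rather than deducing the claim from $GL$-transitivity alone (a generic lift of a matrix to $\mathrm{SAut}(F_{n+1})$ need not carry $S_I$ to $S_J$ as subgroups of the free group). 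Second, your remark about passing to a minimal non-cyclic quotient before invoking Subsection~\ref{orb} is a genuine (and welcome) clarification: the descent you describe --- all quotients of the perfect group $\mathrm{SAut}(F_{n+1})$ remain perfect, so non-cyclic becomes non-trivial, and orders strictly decrease --- tidies up a point the paper leaves implicit.
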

\begin{proof}
Suppose not. Then by the change of coordinates in Lemma~\ref{A}, we may assume that $C_I=C_{I'}$, where $I=(1,0,1,\cdots, 1)$ and $I'=(0,1,1,\cdots, 1)$. But this implies $C_I$ contains the quotient classes of all free transvections in $\mathcal{C}_I$. Moreover, since it also equals $C_{I'}$, we see that $\mathcal{C}_I$ also contains the
quotient classes of $E_{x_1,x_3}$, $E_{x_1^{-1},x_3}$, $E_{x_3,x_1}$, $E_{x_3^{-1},x_1}$. It is now clear from the Gersten presentation of SAut$(F_{n+1})$ that $C_I$ must be $H$, the image of all of SAut$(F_{n+1})$.  This contradicts our discussion in Subsection~\ref{orb} above, so the lemma is proved.
\end{proof}

\subsection{Inductive proof}
We are now ready to complete the proof of our main theorem.
\begin{proof}[Proof of Theorem~\ref{A}] It remains to show that for $n+1\geq 3$, $P(n+1)$ is correct assuming $P(n)$ holds (base cases already checked). By the last subsection, we see that the there are at least $2^{n+1}-1$ conjugacy classes of $C$. Also it is clear that the stabilizer contains $C=AB$. We know from previous discussion that for $i=1,2$, the subgroup $B_i$ (image of $\mathcal{B}_i$) is normal in $C$, we see that $K_i:=A\cap B_i $ is a normal subgroup of $A$.

If for some $i$, the quotient $A/K_i$ of SAut$(F_{n+1})$ is not cyclic (or equivalently non-trivial), by the induction hypothesis $P(n)$, we must have 
$|A/K|\geq |SL(n,\mathbb{Z}_2)|.$ Therefore it follows that from the induction hypothesis and Lemma~\ref{lemB} that:
\begin{equation}
  |C|=|A/K||B|\geq |SL(n,\mathbb{Z}_2)|\cdot 2^n.
\end{equation}

Thus by the orbit stabilizer theorem, we have:
\begin{equation}
  |H|= |Orb(C)|\cdot |Stab(C)|\geq (2^{n+1}-1)|\cdot |C| \geq  (2^{n+1}-1)|\cdot|SL(n,\mathbb{Z}_2)|\cdot 2^n =|SL(n+1,\mathbb{Z}_2)|.
\end{equation}

Finally in the case of equality above, we see from the induction hypothesis $P(n)$ that a normal generator of the Torelli group (see Proposition~\ref{toreli}) must be contained in the kernel of the quotient map $q:\mathrm{SAut}(F_{n+1})\rightarrow H$, and also the image of a free transvection has order 2. Thus $q$ must factor through $SL(n,\mathbb{Z}_2)$, and the result follows.

Lastly, it remains to consider the case that $A/K_1$ and $A/K_2$ are both trivial, but this means $A=K_1=K_2$.
The proof is completed by the exact same argument as in Subsection~\ref{orb} for $n>2$.
However, we need a separate argument for the case $n=2$ since SAut($F_2$) is not perfect. Let us denote the quotient class of $f\in\mathrm{SAut}(F_3)$ in $H$ by $\overline{f}$. We see that we must have $\overline{E_{x_1,x_2}}=\overline{E_{x_2,x^{-1}_1}}$, as the corresponding elements are conjugate in SAut($F_2$) and the quotient is abelian. The relations $E_{x_3,x^{-1}_1}=[E_{x_3,x_2},E_{x_2,x^{-1}_1}]$ and $[E_{x_3,x_2},E_{x_1,x_2}]=1$ in SAut($F_3$) implies:
$$\overline{E_{x_3,x^{-1}_1}}=[\overline{E_{x_3,x_2}},\overline{E_{x_2,x^{-1}_1}}]=[\overline{E_{x_3,x_2}},
\overline{E_{x_1,x_2}}]=1;$$
and hence it follows $H$ must be trivial, a contradiction.
\end{proof}

\bibliographystyle{plain}
\bibliography{references}

\begin{thebibliography}{1}

\bibitem{Baumeister2019OnTS}
B.~Baumeister, Dawid Kielak, and E.~Pierro.
\newblock On the smallest non‐abelian quotient of {Aut($F_n$}).
\newblock {\em Proceedings of The London Mathematical Society}, 118:1547--1591,
  2019.

\bibitem{Gersten1984APF}
S.~Gersten.
\newblock A presentation for the special automorphism group of a free group.
\newblock {\em Journal of Pure and Applied Algebra}, 33:269--279, 1984.

\bibitem{Kielak2017OnTS}
Dawid Kielak and E.~Pierro.
\newblock On the smallest non-trivial quotients of mapping class groups.
\newblock {\em Groups, Geometry, and Dynamics}, 14(2):489–512, 2020.

\bibitem{Kolay2021BMCG}
S.~Kolay.
\newblock Smallest non-cyclic quotients of braid and mapping class groups,
  2021.

\bibitem{Lyndon1987PROBLEMSIC}
R.~Lyndon.
\newblock Problems in combinatorial group theory.
\newblock 1987.

\bibitem{Magnus1935berndimensionaleG}
W.~Magnus.
\newblock {\"U}bern-dimensionale gittertransformationen.
\newblock {\em Acta Mathematica}, 64:353--367, 1935.

\bibitem{Mecchia2009OnMF}
M.~Mecchia and B.~Zimmermann.
\newblock On minimal finite quotients of outer automorphism groups of free
  groups.
\newblock {\em Atti Semin. Mat. Fis. Univ. Modena Reggio Emilia}, page
  115–120, 2011.

\bibitem{Zimmermann2008ANO}
Bruno~P. Zimmermann.
\newblock On minimal finite quotients of mapping class groups.
\newblock {\em Rocky Mountain J. Math.}, 42(4):1411--1420, 08 2012.

\end{thebibliography}

\end{document}